\theoremstyle{plain}
\newtheorem{theorem}{Theorem}
\newtheorem{lemma}{Lemma}
\theoremstyle{definition}
\theoremstyle{remark}
\newtheorem{remark}{Remark}
\numberwithin{equation}{section} % to get equations numbered
\begin{document}
\title[On the Diophantine equation $(x+1)^{k}+(x+2)^{k}+...+(lx)^{k}=y^{n}$]{On the Diophantine equation $(x+1)^{k}+(x+2)^{k}+...+(lx)^{k}=y^{n}$} % please provide
                                % an abbreviated title

\author{G\"{o}khan Soydan}

\address{Department of Mathematics, Uluda\u{g} University, 16059 Bursa, Turkey}

\email{gsoydan@uludag.edu.tr}

\thanks{}

\begin{abstract}
Let $k,l\geq2$ be fixed integers. In this paper, firstly, we prove that all solutions of the equation $(x+1)^{k}+(x+2)^{k}+...+(lx)^{k}=y^{n}$ in integers $x,y,n$ with $x,y\geq1, n\geq2$ satisfy $n<C_{1}$ where $C_{1}=C_{1}(l,k)$ is an effectively computable constant. Secondly, we prove that all solutions of this equation in integers $x,y,n$ with $x,y\geq1, n\geq2, k\neq3$ and $l\equiv0 \pmod 2$ satisfy $\max\{x,y,n\}<C_{2}$ where $C_{2}$ is an effectively computable constant depending only on $k$ and $l$.
\end{abstract}

\dedicatory{To my wife and my daughter}

\subjclass[2010]{Primary 11D61; Secondary 11B68}
\keywords{Bernoulli polynomials, high degree equations}
\date{\today}

\maketitle
\section{Introduction}

In 1956, J.J. Sch{\"a}ffer \cite{Sch} considered the equation

\begin{align}\label{eq.1.1}
1^{k}+2^{k}+...+x^{k}=y^{n}.
\end{align}
He proved that for fixed $k\geq1$ and $n\geq2$, (\ref{eq.1.1}) has at most finitely many solutions in positive integers $x$ and $y$, unless
\begin{align*}
(k,n)\in\{(1,2),(3,2),(3,4),(5,2)\},
\end{align*} 
where, in each case, there are infinitely many such solutions.

Sch{\"a}ffer's proof used an ineffective method due to Thue and Siegel so his result is also ineffective. This means that the proof does not provide any algorithm to find all solutions. Applying Baker's method, K. Gy\H{o}ry, R. Tijdeman and M. Voorhoeve \cite{GTV} proved a more general and effective result in which the exponent $n$ is also unknown.

Let $k\geq2$ and $r$ be fixed integers with $k\notin\{3,5\}$ if $r=0$, and let $s$ be a square-free odd integer. In \cite{GTV}, they proved that the equation
\begin{align*}
s(1^{k}+2^{k}+...+x^{k})+r=y^{n}
\end{align*}
in positive integers $x,y\geq2$, $n\geq2$ has only finitely many solutions and all these can be effectively determined.
Of particular importance is the special case when $s=1$ and $r=0$. They also showed that for given $k\geq2$ with $k\notin\{3,5\}$, equation (\ref{eq.1.1}) has only finitely many solutions in integers $x,y\geq1$, $n\geq2$, and all these can be effectively determined.
The following striking result is due to Voorhoeve, Gy\H{o}ry and Tijdeman \cite{VGT}.

Let $R(x)$ be a fixed polynomial with integer coefficients and let $k\geq2$ be a fixed integer such that $k\notin\{3,5\}$. In  \cite{VGT}, same authors proved that the equation
\begin{align*}
1^{k}+2^{k}+...+x^{k}+R(x)=by^{n}
\end{align*}
in integers $x,y\geq2$, $n\geq2$ has only finitely many solutions, and an effective upper bound can be given for $n$.  
Later, various generalizations and analogues of the results of Gy\H{o}ry, Tijdeman and Voorhoeve have been established by several authors \cite{Br1}, \cite{Br2}, \cite{BP1}, \cite{BP2}, \cite{Di},\cite{Ka}, \cite{Pi}, \cite{Ur}. For a survey of these results we refer to \cite{GP} and the references given there. 

Here we present the result of B. Brindza \cite{Br2}. For brevity let us set $S_{k}(x)=1^{k}+2^{k}+...+x^{k}$, $A=\mathbb{Z}[x]$, $\kappa=(k+1)\displaystyle{\prod_{(p-1)|(k+1)!}}p$ ($p$ prime). Let
\begin{align*}
F(y)=Q_{n}y^{n}+...+Q_{1}y+Q_{0}\in A[y].
\end{align*}
Consider the equation
\begin{align}\label{eq.1.2} 
F(S_{k}(x))=y^{n}
\end{align}
in integers $x,y\geq2$, $n\geq2$. Let $Q_{i}(x)=\kappa^{i}K_{i}(x)$ where $K_{i}(x)\in \mathbb{Z}[x]$ for $i=2,3,...,m$. In \cite{Br2}, Brindza proved that if $Q_{i}(x)\equiv0\pmod{\kappa^{i}}$, for $i=2,3,...,m$; $Q_{1}(x)\equiv\pm1\pmod{4}$ and $k\notin\{1,2,3,5\}$, then all solutions of \eqref{eq.1.2} satisfy $\max\{x,y,n\}<C_{1}$, where $C_{1}$ is an effectively computable constant depending only on $F$ and $k$.

Recently C. Rakaczki \cite{Ra} gave a generalization of the results of Gy\H{o}ry, Tijdeman and Voorhoeve and an extension of the result of Brindza to the case when the polynomials $Q_{i}(x)$ are arbitrary constant polynomials.

Let $F(x)$ be a polynomial with rational coefficients and $d\neq0$ be an integer. Suppose that $F(x)$ is not an $n$-th power. In \cite{Ra}, Rakaczki showed that the equation 
\begin{align*}
F(S_{k}(x))=dy^{n}
\end{align*}
has only finitely many integer solutions $x,y\geq2$, $n\geq2$, which can be effectively determined provided that $k\geq6$. 

Let $k>1$, $r,s\neq0$ be fixed integers. Then apart from the cases when $(i)$ $k=3$ and either $r=0$ or $s+64r=0$, and $(ii)$ $k=5$ and either $r=0$ or $s-324r=0$, Rakaczki proved that the equation
\begin{align*}
s(1^{k}+2^{k}+...+x^{k})+r=y^{n}
\end{align*}
in integers $x>0$, $y$ with $|y|\geq2$, and $n\geq2$ has only finitely many solutions which can be effectively determined. 

Recently, Z. Zhang \cite{Zh} studied the Diophantine equation
\begin{align*}
(x-1)^{k}+x^{k}+(x+1)^{k}=y^{n}, n>1,
\end{align*} 
and completely solved it for $k=2,3,4$. Now we consider a more general equation. Let
\begin{align*}
G(x)=(x+1)^{k}+(x+2)^{k}+...+(lx)^{k}.
\end{align*}

In this paper, we are interested in the solutions of the equation
\begin{align}\label{eq.1.3}
G(x)=y^{n}
\end{align}
in integers $x,y\geq1$ and $n\geq2$.
\setcounter{theorem}{0}
\begin{theorem} \label{theo.1}
Let $k,l\geq2$ fixed integers. Then all solutions of the equation \eqref{eq.1.3} in integers $x,y\geq1$ and $n\geq2$ satisfy $n<C_{1}$ where $C_{1}$ is an effectively computable constant depending only on $l$ and $k$.  
\end{theorem}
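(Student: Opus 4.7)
The plan is to recast $G(x)$ as a polynomial in $x$ of degree $k+1$ and then to apply the classical theorem of Schinzel and Tijdeman on equations of the form $y^{n}=f(x)$. Using the familiar identity
\[
S_{k}(m) = \frac{1}{k+1}\bigl(B_{k+1}(m+1)-B_{k+1}(1)\bigr),
\]
which expresses the power sum $S_{k}(m)=1^{k}+2^{k}+\cdots+m^{k}$ in terms of the Bernoulli polynomial $B_{k+1}$, I would rewrite the left-hand side of \eqref{eq.1.3} as
\[
G(x)=S_{k}(lx)-S_{k}(x)=\frac{1}{k+1}\bigl(B_{k+1}(lx+1)-B_{k+1}(x+1)\bigr),
\]
which is a polynomial of degree $k+1$ in $x$ with rational coefficients and positive leading coefficient $(l^{k+1}-1)/(k+1)$.

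The decisive step is to verify that $G(x)$ possesses at least two distinct complex roots. Since $G(0)=0$, the origin is a root, so if $G$ had only one distinct root it would have to be a constant multiple of $x^{k+1}$. Expanding $B_{k+1}$ one coefficient further using $B_{1}=-1/2$, a short calculation shows that the coefficient of $x^{k}$ in $G$ equals $(l^{k}-1)/2$, which is nonzero for every $l\ge 2$. Consequently $G(x)$ is not a scalar multiple of $x^{k+1}$ and therefore has at least two distinct complex zeros.

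For $x\ge 1$ one trivially has $G(x)\ge (x+1)^{k}\ge 2^{k}\ge 4$, so every hypothetical solution automatically satisfies $y\ge 2$. I would then invoke the effective theorem of Schinzel and Tijdeman: for a polynomial $f\in\mathbb{Q}[x]$ with at least two distinct complex roots, the equation $y^{n}=f(x)$ in integers with $|y|\ge 2$ and $n\ge 2$ forces $n$ to be bounded by an effectively computable constant depending only on $f$. Applied to our $G$, this yields the desired bound $n<C_{1}(l,k)$.

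The main obstacle lies in the second step, that is, certifying that $G$ is not a scalar multiple of $x^{k+1}$. The short Bernoulli computation performed above resolves this uniformly in $(k,l)$; had the sub-leading coefficient chanced to vanish, one would have been forced to inspect the next coefficient, to analyze the derivative $G'(x)=lB_{k}(lx+1)-B_{k}(x+1)$, or to exploit the discriminant of $G$ in order to certify that the polynomial is not a pure power of a linear factor.
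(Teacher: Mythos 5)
Your proposal is correct and follows essentially the same route as the paper: the identity $G(x)=\frac{1}{k+1}\bigl(B_{k+1}(lx+1)-B_{k+1}(x+1)\bigr)$ is the paper's Lemma \ref{lem.1}, the observation that $G(0)=0$ while the coefficient $(l^{k}-1)/2$ of $x^{k}$ is nonzero (so $G$ is not a scalar multiple of $x^{k+1}$ and hence has two distinct zeros) is exactly the paper's Lemma \ref{lem.4}, and the conclusion via the Schinzel--Tijdeman theorem is the paper's Lemma \ref{lem.3}. Your extra remark that $G(x)\geq 2^{k}$ forces $y\geq 2$ is a small but welcome addition, since the Schinzel--Tijdeman statement requires $y>1$.
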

\begin{theorem}\label{theo.2}
Let $k,l\geq2$ fixed integers such that $k\neq3$. Then all solutions of the equation \eqref{eq.1.3} in integers $x,y,n$ with $x,y\geq1$, $n\geq2$, and $l\equiv0 \pmod 2$ satisfy $\max\{x,y,n\}<C_{2}$ where $C_{2}$ is an effectively computable constant depending only on $l$ and $k$.
\end{theorem}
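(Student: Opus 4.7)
The plan is to combine Theorem~\ref{theo.1} with an effective finiteness theorem for the superelliptic equation $f(x) = y^{n}$ with fixed exponent $n$. Since Theorem~\ref{theo.1} already gives $n<C_{1}(l,k)$, it suffices to prove that for each fixed $n$ in the finite range $\{2,3,\ldots,C_{1}-1\}$, the equation $G(x) = y^{n}$ has only finitely many integer solutions $x\geq 1$, all effectively bounded.

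For fixed $n$ the natural tool is an effective theorem of Brindza (obtained via Baker's method) on equations of the shape $f(x)=by^{n}$: writing $f(x) = a\prod_{i}(x-\alpha_{i})^{r_{i}} \in \mathbb{Q}[x]$, the equation has only effectively computable integer solutions unless the multiset $\bigl\{n/\gcd(n,r_{i})\bigr\}_{i}$ of reduced exponents falls into one of an explicit list of degenerate shapes (those giving a curve of genus zero with too few points at infinity). To apply this to $G$ I would first rewrite
\[
G(x) \;=\; \frac{1}{k+1}\bigl(B_{k+1}(lx+1) - B_{k+1}(x+1)\bigr)
\]
using the Bernoulli closed form for the power sum $S_{k}(m)$, which displays $G$ as a polynomial of degree $k+1$ whose zero structure is controlled by the arithmetic of $B_{k+1}$.

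The heart of the argument, and the step I expect to be the most delicate, is verifying Brindza's non-degeneracy hypothesis for each admissible $n$. Since $G(0)=0$, one first pins down the multiplicity of the root $x=0$ via the iterated derivatives $G'(0)=(l-1)B_{k}(1)$, $G''(0)=k(l^{2}-1)B_{k-1}(1)$, and so on; the hypotheses $k\geq 2$, $k\neq 3$ and $l\equiv 0\pmod 2$ are used here to ensure this multiplicity is small and coprime to $n$ for every $n<C_{1}$. One then needs to produce further distinct roots of $G$ with controlled multiplicities, exploiting the $\kappa$-adic information on the coefficients of $B_{k+1}$ recalled in the introduction together with the fact that $B_{k+1}(lx+1)$ and $B_{k+1}(x+1)$, being two distinct shifts of the same separable polynomial, cannot share high-order common factors. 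The exclusion $k=3$ is essential because the identity $S_{3}(m)=S_{1}(m)^{2}$ makes $G$ itself a square up to a linear factor (the analogue of the classical Sch\"affer exception), while $l\equiv 0\pmod 2$ is needed to rule out parallel genus-zero degenerations, particularly for the small values $n=2,3,4$. Once non-degeneracy is established for every $n$ in the finite range supplied by Theorem~\ref{theo.1}, Brindza's theorem yields an effective bound on $x$, and hence on $y=G(x)^{1/n}$, which is exactly the content of Theorem~\ref{theo.2}.
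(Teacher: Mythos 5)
Your overall architecture matches the paper exactly: bound $n$ by Theorem~\ref{theo.1}, then for each fixed $n$ apply Brindza's effective superelliptic theorem (Lemma~\ref{lem.2}) to $P(x)=B_{k+1}(lx+1)-B_{k+1}(x+1)$, the whole difficulty being the verification of the non-degeneracy condition on the multiset $\{n/\gcd(n,r_i)\}$. But that verification is precisely where your proposal has a genuine gap, and the substitute arguments you sketch would not close it. Computing the multiplicity of the single root $x=0$ via $G'(0)=(l-1)B_k(1)$, etc., gives no control over the remaining roots, and Brindza's exceptional shapes $\{t,1,\dots,1\}$ and $\{2,2,1,\dots,1\}$ are excluded only by \emph{global} information: one needs that $P$ has at least three zeros of odd multiplicity (to handle even $n$) and, for every odd prime $p\mid n$, at least two zeros of multiplicity coprime to $p$. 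Your appeal to the fact that $B_{k+1}(lx+1)$ and $B_{k+1}(x+1)$ are ``two distinct shifts of the same separable polynomial'' and so ``cannot share high-order common factors'' addresses common factors of the two summands, which is irrelevant: the problem is multiple factors of their \emph{difference}, and a difference of two separable polynomials can perfectly well be highly non-squarefree. Nothing in your sketch rules out, say, $P(x)=x\,Q(x)^2$ up to constants, which is exactly the degenerate case that must be excluded.

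What the paper actually does (Lemma~\ref{lem.6}, occupying most of Section~2) is the Voorhoeve--Gy\H{o}ry--Tijdeman $2$-adic argument: clear the minimal denominator $d$ of $P$ using the von Staudt--Clausen theorem, determine $d\bmod 4$ according to whether $q=k+1$ is a power of $2$, even but not a power of $2$, or odd, and then read off the odd-multiplicity part of $dP(x)$ from explicit congruences modulo $2$ and modulo $4$ such as \eqref{eq.1.13}, \eqref{eq.1.19} and \eqref{eq.1.20}. The hypothesis $l\equiv 0\pmod 2$ enters there (through $lx+1\equiv 1$ or $2x+1\pmod 4$), not merely to handle small $n$ as you suggest; and the exclusion $k=3$ arises because for $q=4$ one gets $P(x)=\frac{1}{4}x^2(l-1)((l^2+1)x+l+1)((l+1)x+1)$ --- a square times \emph{two} distinct linear factors, i.e.\ Brindza's shape $\{2,2,1,\dots\}$ for $n=2$, leading to a genuine Pell equation with infinitely many solutions (Section~3), rather than ``a square up to a linear factor.'' Without an argument of the strength of Lemma~\ref{lem.6}, your proof of Theorem~\ref{theo.2} is incomplete.
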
 

We organize this paper as follows. In Section 2, firstly, we recall the general results that we will need. Secondly, we give two new lemmas and prove that these lemmas imply our theorems. In Section 3, we discuss the number of solutions in integers $x,y\geq 1$ of \eqref{eq.1.3} where $n>1$ is fixed $k\in\{1,3\}$ and $l\equiv0 \pmod 2$ and reformulate this case. In the last section, we give the proofs of Theorems \ref{theo.1} and \ref{theo.2}.

\section{Auxiliary Results}
\begin{lemma} \label{lem.1}
$(x+1)^{k}+(x+2)^{k}+...+(lx)^{k}=\dfrac{B_{k+1}(lx+1)-B_{k+1}(x+1)}{k+1}$ where
\begin{align*}
B_{q}(x)=x^{q}-\frac{1}{2}qx^{q-1}+\dfrac{1}{6}\binom {q} {2}x^{q-2}+...=\sum\limits_{i=0}^q\binom {q} {i}B_{i}x^{q-i}
\end{align*}
is the q-th Bernoulli polynomial with $q=k+1$.
\end{lemma}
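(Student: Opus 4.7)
The plan is to reduce the identity to the classical summation formula for power sums via Bernoulli polynomials, namely
\[
\sum_{j=1}^{m} j^{k} \;=\; \frac{B_{k+1}(m+1)-B_{k+1}(1)}{k+1},
\]
and then apply it twice and subtract. Since the left-hand side of the lemma is exactly $\sum_{j=x+1}^{lx} j^{k} = \sum_{j=1}^{lx} j^{k} - \sum_{j=1}^{x} j^{k}$, after substituting the boundary terms $B_{k+1}(1)$ will cancel, leaving precisely the claimed expression $\bigl(B_{k+1}(lx+1)-B_{k+1}(x+1)\bigr)/(k+1)$.

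The first step is therefore to establish the classical power-sum formula. The cleanest route is through the characteristic functional equation of the Bernoulli polynomials,
\[
B_{k+1}(y+1) - B_{k+1}(y) \;=\; (k+1)\, y^{k},
\]
which can be verified directly from the definition $B_{q}(y)=\sum_{i=0}^{q}\binom{q}{i}B_{i}y^{q-i}$ given in the statement, by expanding $B_{k+1}(y+1)$ with the binomial theorem, collecting terms, and using the well-known recursion $\sum_{i=0}^{q-1}\binom{q}{i}B_{i}=0$ for $q\ge 2$ satisfied by the Bernoulli numbers (or, alternatively, from the generating-function identity $\sum_{q\ge 0} B_{q}(y)\,t^{q}/q! = t e^{yt}/(e^{t}-1)$, where multiplying by $e^{t}$ immediately yields the functional equation).

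The second step is to telescope: summing the functional equation over $y=1,2,\dots,m$ gives
\[
B_{k+1}(m+1) - B_{k+1}(1) \;=\; (k+1)\sum_{j=1}^{m} j^{k},
\]
which is the classical formula. Applying this with $m=lx$ and $m=x$ and subtracting yields the lemma.

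No step is really an obstacle here; the whole result is a standard manipulation once one has the Bernoulli functional equation in hand. The only part deserving care is the verification of $B_{k+1}(y+1)-B_{k+1}(y)=(k+1)y^{k}$ directly from the polynomial definition used in the paper, but this is a routine computation and may be quoted from any standard reference on Bernoulli polynomials without loss.
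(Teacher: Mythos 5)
Your proof is correct and follows essentially the same route as the paper, which simply invokes Rademacher's formula $\sum_{n=M}^{N-1} n^{k}=\frac{1}{k+1}\bigl(B_{k+1}(N)-B_{k+1}(M)\bigr)$ with $M=x+1$ and $N=lx+1$; your version derives that same identity from the functional equation and telescoping, and splitting the sum at $j=x$ versus substituting both endpoints at once is an immaterial difference.
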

\begin{proof}
It is an application of the equality
\begin{align*}
\sum\limits_{n=M}^{N-1} n^{k}=\frac{1}{k+1} (B_{k+1}(N)-B_{k+1}(M))
\end{align*}
which is given by Rademacher in \cite{Rd}, pp.3-4.
\end{proof}

Now we give an important result of Brindza which is an effective version of Leveque's theorem \cite{Le}
\begin{lemma}[Brindza]\label{lem.2}
Let $H(x)\in\mathbb{Q}[x]$,
\begin{align*}
H(x)=a_{0}x^{N}+...+a_{N}=a_{0}\prod_{i=1}^n (x-\alpha_{i})^{r_{i}},
\end{align*}
with $a_{0}\neq0$ and $\alpha_{i}\neq\alpha_{j}$ for $i\neq j$. Let $0\neq b\in\mathbb{Z}$, $2\leq m\in\mathbb{Z}$ and define $t_{i}=\frac{m}{(m,r_{i})}$. Suppose that $\{t_{1},...,t_{n}\}$ is not a permutation of the n-tuples

$(a)$ $\{t,1,...,1\}$, $t\geq1$; $(b)$ $\{2,2,1,...,1\}.$

Then all solutions $(x,y)\in\mathbb{Z}^{2}$ of the equation
\begin{align*}
H(x)=by^{m}
\end{align*}
satisfy  $\max\{|x|,|y|\}<C$, where $C$ is an effectively computable constant depending only on $H$, $b$ and $m$.
\end{lemma}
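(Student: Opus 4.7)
The plan is to reduce the superelliptic equation $H(x)=by^{m}$ to a finite collection of effectively solvable equations of Thue or $S$-unit type, and then invoke Baker's theory of linear forms in logarithms to bound the solutions. This is exactly the route by which an ineffective Thue--Siegel argument (LeVeque's original theorem) is upgraded to an effective statement.

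First I would pass to the splitting field $K=\mathbb{Q}(\alpha_{1},\dots,\alpha_{n})$ of $H$ and enlarge the set of places $S$ of $K$ to include the archimedean places, the primes dividing $m a_{0} b\,\mathrm{disc}(H)$, and the primes occurring in the denominators of the $\alpha_{i}$. Working in the ring $\mathcal{O}_{K,S}$, the ideal $(x-\alpha_{i})$ decomposes so that, after factoring out a controllable $S$-integer piece, every $(x-\alpha_{i})$ has the form $u_{i}\beta_{i}^{t_{i}}$ with $u_{i}$ an $S$-unit taken from a finite set of representatives modulo $m$-th powers and $\beta_{i}\in\mathcal{O}_{K,S}$. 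This is the standard LeVeque factorisation and uses only the identity $t_{i}=m/(m,r_{i})$ together with finiteness of the $S$-class group and of $\mathcal{O}_{K,S}^{\times}/(\mathcal{O}_{K,S}^{\times})^{m}$.

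Second, taking pairwise differences kills $x$ and yields, for any two indices $i\neq j$, an equation
\begin{equation*}
u_{i}\beta_{i}^{t_{i}}-u_{j}\beta_{j}^{t_{j}}=\alpha_{j}-\alpha_{i}.
\end{equation*}
If some $t_{i}\geq 3$, this is a Thue equation (when the other $t_{j}=1$) or a Thue--Mahler / superelliptic equation over $\mathcal{O}_{K,S}$, and Baker's estimates for linear forms in logarithms give an effective upper bound for the height of $\beta_{i}$, hence for $|x|$ and $|y|$. If instead two exponents equal $2$ while a third is $\geq 2$, one gets a system of two genus-$\geq 1$ conic equations whose common solutions are bounded by the effective results on hyperelliptic equations (again via Baker). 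Running over the finite set of admissible $S$-unit representatives $u_{i}$ gives the desired effective constant $C$.

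The main obstacle is the exhaustive case analysis hidden in the hypothesis: one has to verify that the tuples which are \emph{not} permutations of $\{t,1,\dots,1\}$ or $\{2,2,1,\dots,1\}$ always contain either one $t_{i}\geq 3$ paired with a partner amenable to Thue's equation, or at least three $t_{i}\geq 2$ leading to a genus $\geq 1$ curve. The two excluded tuples are precisely those for which the associated curve has genus $0$ and admits infinitely many integer points, so the dichotomy is sharp; checking that every other pattern falls into an effectively tractable subclass, and tracking how the effective bound depends only on $H$, $b$ and $m$ (and not on the auxiliary field $K$ or on $S$), is the most delicate part of the argument.
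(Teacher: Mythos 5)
The paper does not prove this lemma at all; it simply cites Brindza's original article, and your outline (LeVeque's factorisation of the $x-\alpha_{i}$ into $S$-unit times $t_{i}$-th power in the splitting field, followed by Baker-type effective bounds for the resulting Thue/superelliptic/unit equations) is precisely the argument of that reference, so you are on the intended route. One small slip to fix: the pairwise difference equation $u_{i}\beta_{i}^{t_{i}}-u_{j}\beta_{j}^{t_{j}}=\alpha_{j}-\alpha_{i}$ only constrains the solutions when \emph{both} exponents $t_{i},t_{j}$ are at least $2$ (with one of them at least $3$), not ``when the other $t_{j}=1$'' --- in that case $\beta_{j}$ is unconstrained and nothing is bounded; the correct dichotomy is the one you state in your closing paragraph.
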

\begin{proof}
See B. Brindza \cite{Br2}.
\end{proof}
\begin{lemma}[Schinzel \& Tijdeman] \label{lem.3}
Let $0\neq b \in\mathbb{Z}$ and let $P(x) \in\mathbb{Q}[x]$ be a polynomial with at least two distinct zeroes. Then the equation
\begin{align*}
P(x)=by^{n}
\end{align*}
in integers $x,y>1$, $n$ implies that $n<C$ where $C=(P,b)$ is an effectively computable constant.
\end{lemma}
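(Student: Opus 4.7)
My plan is to reduce the equation to a Thue--Mahler form in the splitting field of $P$ and then invoke Baker's theorem on linear forms in (archimedean and non-archimedean) logarithms. Write $P(x)=a_{0}\prod_{i=1}^{s}(x-\alpha_{i})^{r_{i}}$ over $\overline{\mathbb{Q}}$, where the $\alpha_{i}$ are distinct; by hypothesis $s\geq 2$. Let $R=\max_{i}r_{i}$ and let $K$ be the splitting field of $P$.

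First I would check that for $n>2R$ the multiset $(t_{1},\ldots,t_{s})$ with $t_{i}=n/\gcd(n,r_{i})$ satisfies $t_{i}\geq 3$ for every $i$, because $\gcd(n,r_{i})\leq r_{i}<n/2$. In particular this multiset is neither of shape $(t,1,\ldots,1)$ nor of shape $(2,2,1,\ldots,1)$, so Brindza's Lemma \ref{lem.2} applies --- but its constant depends a priori on $n$, and the real task is to bound $n$ itself.

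For that, pick two distinct roots $\alpha_{1},\alpha_{2}$ and work in $\mathcal{O}_{K,S}$, where $S$ is a finite set of places of $K$ containing the archimedean places and those dividing $a_{0}$, $b$, the discriminant of $P$, and the numbers $\alpha_{i}-\alpha_{j}$. Comparing $\mathfrak{p}$-adic valuations on both sides of $P(x)=by^{n}$ one shows, by a standard $S$-integer argument, that each factor can be written as $x-\alpha_{i}=\gamma_{i}\,z_{i}^{\,n_{i}}$, with $n_{i}=n/\gcd(n,r_{i})\geq n/R$, with $\gamma_{i}\in K^{*}$ of height bounded only in terms of $P$, $b$, and $S$, and with $z_{i}\in\mathcal{O}_{K,S}$.

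Subtracting two such identities for $i=1,2$ yields the Thue--Mahler equation
\[
\gamma_{2}z_{2}^{\,n_{2}}-\gamma_{1}z_{1}^{\,n_{1}}=\alpha_{1}-\alpha_{2}\neq 0
\]
in $S$-integers $z_{1},z_{2}$. By effective Baker-type estimates for equations of this shape, $\max(n_{1},n_{2})$ is bounded by a constant depending only on $P$ and $b$; since $n\leq R\cdot\max(n_{1},n_{2})$, we obtain the required bound $n<C(P,b)$. The main obstacle is producing the clean $n_{i}$-th power decomposition of $x-\alpha_{i}$: it requires careful bookkeeping of how the primes of $S$ distribute among the factors, and this is precisely where the hypothesis that $P$ has two distinct roots is indispensable, since otherwise no non-trivial Thue--Mahler relation can be extracted and the equation degenerates to a single power, for which no bound on $n$ is possible.
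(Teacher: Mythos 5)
The paper does not actually prove this lemma: it is the Schinzel--Tijdeman theorem, quoted with a citation to \cite{ST}, so there is no internal argument to compare against. Your sketch does follow the broad strategy of the original proof (factor over the splitting field, extract near-$n$-th-power decompositions of the $x-\alpha_i$, subtract two of them, apply Baker's method), but as written it has a genuine gap that makes the decisive step circular. The relation $\gamma_{2}z_{2}^{\,n_{2}}-\gamma_{1}z_{1}^{\,n_{1}}=\alpha_{1}-\alpha_{2}$ with the exponents $n_{1},n_{2}$ unknown is not a Thue--Mahler equation, and there is no off-the-shelf ``Baker-type estimate for equations of this shape'' that bounds $\max(n_{1},n_{2})$: if $z_{1},z_{2}$ happened to be roots of unity, or $S$-units of bounded height, the left-hand side would take only finitely many values and the relation could hold for infinitely many exponents. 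Bounding the exponent in a two-term relation of this kind is exactly the content of the theorem you are trying to prove, so the last step assumes the conclusion.

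The missing ingredient is any use of the hypothesis $y>1$, which is indispensable: were $y=1$ allowed, any $x_{0}$ with $P(x_{0})=b$ would yield a solution for every $n$, so no bound on $n$ could exist, and a proof that never invokes $y>1$ cannot be correct. In the genuine argument, $|y|\geq 2$ together with $|y|^{n}\ll |x|^{\deg P}$ forces $\log|x|\geq cn$ for an effective $c>0$, hence the $z_{i}$ have height bounded below in terms of $n$; it is this lower bound, played against Baker's inequality and the upper estimate $\left|\gamma_{2}z_{2}^{\,n_{2}}/\bigl(\gamma_{1}z_{1}^{\,n_{1}}\bigr)-1\right|\ll 1/|x|$, that yields a contradiction for large $n$. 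A second, repairable, inaccuracy: the multipliers $\gamma_{i}$ cannot be taken of height bounded independently of $n$, since reducing the $S$-unit part modulo $n$-th powers produces representatives of height as large as $C^{n}$; one must instead keep the unit exponents as integer coefficients inside the linear form in logarithms, where they cost only a factor $\log n$. Your opening paragraph about Lemma \ref{lem.2} and the $t_{i}$ is correct but, as you yourself note, does not bear on bounding $n$.
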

\begin{proof}
See A. Schinzel and R. Tijdeman \cite{ST}.
\end{proof}
\begin{lemma}\label{lem.4}
For $k \in\mathbb{Z}^{+}$ let $B_{k}(x)$ be the k-th Bernoulli polynomial. Then the polynomial
\begin{align*}
G(x)=\frac{B_{k+1}(lx+1)-B_{k+1}(x+1)}{k+1}
\end{align*}
has at least two distinct zeroes.
\end{lemma}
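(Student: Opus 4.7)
My plan is to exhibit $0$ as an explicit zero of $G(x)$ and then rule out the possibility that $0$ is a zero of multiplicity $k+1$ by inspecting the first two coefficients of $G$.

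First, substituting $x=0$ into $G$ gives $G(0)=\frac{B_{k+1}(1)-B_{k+1}(1)}{k+1}=0$, so $0$ is always a zero of $G$. (Equivalently, by Lemma~\ref{lem.1}, the sum in question is empty when $x=0$.)

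Next, I would extract the top two coefficients of $G(x)$ using the expansion $B_{k+1}(y) = y^{k+1} - \tfrac{k+1}{2}y^k + O(y^{k-1})$. Applying the binomial theorem to $(lx+1)^{k+1}$ and $(lx+1)^{k}$, I expect to obtain
\begin{equation*}
B_{k+1}(lx+1) = l^{k+1}x^{k+1} + \tfrac{k+1}{2}\,l^{k}\, x^{k} + O(x^{k-1}),
\end{equation*}
and the analogous formula with $l$ replaced by $1$ for $B_{k+1}(x+1)$. Subtracting and dividing by $k+1$ then gives
\begin{equation*}
G(x) = \frac{l^{k+1}-1}{k+1}\,x^{k+1} + \frac{l^{k}-1}{2}\,x^{k} + O(x^{k-1}).
\end{equation*}

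Since $l\geq 2$, the coefficient $(l^{k}-1)/2$ is nonzero, so $G(x)$ cannot be a scalar multiple of $x^{k+1}$. Therefore the multiplicity of $0$ as a zero of $G$ is strictly less than $k+1=\deg G$, and dividing out the corresponding power of $x$ leaves a polynomial factor of positive degree, which must have a complex root different from $0$. Hence $G$ admits at least two distinct zeros, as required. The only substantive step is the coefficient extraction, and there is no real obstacle here since we only need one subleading coefficient to be nonzero.
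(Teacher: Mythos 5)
Your proposal is correct and follows essentially the same route as the paper: both note that $x=0$ is a zero, compute via the Bernoulli expansion that $G(x)=\frac{l^{k+1}-1}{k+1}x^{k+1}+\frac{l^{k}-1}{2}x^{k}+\cdots$, and conclude that since the $x^{k}$ coefficient is nonzero for $l\geq 2$, the polynomial $G$ cannot be a constant multiple of $x^{k+1}$, so it has a second distinct zero. Your version just spells out the coefficient extraction and the multiplicity argument a bit more explicitly than the paper does.
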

\begin{proof}
By Lemma \ref{lem.1}, we have $G(x)=(\frac{l^{k+1}-1}{k+1})x^{k+1}+(\frac{l^{k}-1}{2})x^{k}+...+cx$
where $c$ is a rational number. Now one can observe that the coefficient of $x^{k}$ is nonzero and that $x=0$ is a zero of $G(x)$. Let's also assume that there is no other zero of $G(x)$. Thus we have
\begin{align*}
G(x)=\bigg(\frac{l^{k+1}-1}{k+1}\bigg)x^{k+1}
\end{align*}
which is a contradiction.  
\end{proof}
\begin{lemma}[Voorhoeve, Gy\H{o}ry and Tijdeman] \label{lem.5}
Let $q\geq2$, $R^{*}(x)\in\mathbb{Z}[x]$ and set
\begin{align*}
Q(x)=B_{q}(x)-B_{q}+qR^{*}(x).
\end{align*}
Then

$(i)$ $Q(x)$ has at least three zeros of odd multiplicity, unless $q\in\{2,4,6\}$.

$(ii)$ For any odd prime $p$, at least two zeros of $Q(x)$ have multiplicities relatively prime to $p$.
\end{lemma}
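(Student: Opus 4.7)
My plan is to combine the derivative identity
$$Q'(x) \;=\; q\bigl(B_{q-1}(x) + (R^*)'(x)\bigr)$$
with arithmetic information about Bernoulli polynomials modulo primes dividing $q$. Both parts have the same structure: assume a factorization of $Q$ with too few zeros of a prescribed type, and derive a contradiction.

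For part~(i), suppose for contradiction that $Q$ has at most two distinct zeros of odd multiplicity. Over $\overline{\mathbb{Q}}$ one can then factor
$$Q(x) \;=\; a\,P(x)\,U(x)^{2},$$
where $P$ is square-free of degree $s\leq 2$ (collecting exactly the odd-multiplicity zeros, each with multiplicity one) and $U$ is monic of degree $(\deg Q-s)/2$. Differentiating this product and matching it against the identity above gives $U(x)\mid B_{q-1}(x)+(R^*)'(x)$ in $\overline{\mathbb{Q}}[x]$. A naive degree count is insufficient, since $R^{*}$ may have arbitrarily large degree; instead I use that for any prime $p\mid q$ the term $qR^{*}(x)$ is coefficient-wise divisible by $p$, so the reduction of $Q(x)$ modulo $p$ (after clearing denominators by von Staudt--Clausen) depends only on $B_{q}(x)-B_{q}$, hence only on $q$ and $p$. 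A case analysis of the factorization of $B_{q}(x)-B_{q}$ in $\mathbb{F}_{p}[x]$ for suitable primes $p$ then shows that a shape $P\cdot U^{2}$ with $\deg P\leq 2$ is impossible unless $q\in\{2,4,6\}$, where the exceptions can be checked directly.

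Part~(ii) is parallel, with $U^{2}$ replaced by $V^{p}$. If $Q$ had at most one zero of multiplicity coprime to the odd prime $p$, I could write $Q(x)=a(x-\alpha)^{e}V(x)^{p}$ with $\gcd(e,p)=1$, so that $V(x)^{p-1}\mid q\bigl(B_{q-1}(x)+(R^{*})'(x)\bigr)$. An analogous mod-$p$ analysis of this divisibility, using Kummer-type congruences to control $B_{q-1}(x)\pmod p$ and the fact that $B_{q-1}(x)$ modulo $p$ is not a $(p-1)$-th power times a linear factor, produces the required contradiction; this time no exceptional values of $q$ arise.

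The principal obstacle is the explicit modular factorization analysis of $B_{q}(x)-B_{q}$ and $B_{q-1}(x)$ in $\mathbb{F}_{p}[x]$, together with pinpointing the three genuine exceptions $q\in\{2,4,6\}$ in part~(i) and verifying that no exceptions arise in part~(ii). This is precisely the content of the original proof by Voorhoeve, Gy\H{o}ry and Tijdeman~\cite{VGT}, whose argument I would follow in detail.
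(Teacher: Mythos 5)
The paper does not actually prove this lemma: its ``proof'' is the single line ``See M.~Voorhoeve, K.~Gy\H{o}ry and R.~Tijdeman \cite{VGT}'', and your proposal likewise ends by deferring to \cite{VGT}. Your overall skeleton --- assume too few zeros of the prescribed type, write $Q=aPU^{2}$ (resp.\ $a(x-\alpha)^{e}V^{p}$), and rule this shape out by a congruence analysis of $Q$ after clearing denominators via von Staudt--Clausen --- is indeed the VGT strategy, and it is the strategy the paper itself reproduces in its proof of Lemma~6 (the analogue for $B_{q}(lx+1)-B_{q}(x+1)$). To that extent you have identified the right blueprint.

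However, the specific modular mechanism you describe would not work as written, and it is not the one in \cite{VGT}. You propose to reduce modulo primes $p\mid q$ so that $qR^{*}(x)$ vanishes. But in part (ii) the conclusion must hold for \emph{every} odd prime $p$, including primes with $p\nmid q$, and for such $p$ the reduction of $Q$ modulo $p$ does not eliminate $qR^{*}(x)$ at all (and $R^{*}$ is an arbitrary integer polynomial, possibly of large degree); your appeal to properties of $B_{q-1}(x)\bmod p$ therefore has nothing to bite on. Likewise in part (i), ``odd multiplicity'' is a $2$-adic condition, yet for odd $q$ the prime $2$ does not divide $q$, so your reduction is unavailable exactly where it is needed. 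The actual argument works modulo $2$ and modulo $4$ \emph{regardless} of $q$ and of the odd prime $p$ in question: von Staudt--Clausen shows the minimal denominator $d$ with $dQ\in\mathbb{Z}[x]$ is odd iff $q=2^{\mu}$ and otherwise satisfies $d\equiv 2\pmod 4$, whence $dqR^{*}(x)\equiv 0$ modulo $4$ (resp.\ modulo $2$) in the relevant cases; one then reads off an explicit shape such as $d'x^{4t}+2x^{3t}+dx^{2t}+2x^{t}\pmod 4$, or $x^{\delta}(x+1)^{\delta}\pmod 2$ with $\delta=q/2$ coprime to $p$, and shows it is incompatible with $PU^{2}$ or $(x-\alpha)^{e}V^{p}$. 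Your derivative identity $Q'=q(B_{q-1}+(R^{*})')$ and the divisibility $U\mid Q'$ are correct but are not the engine of the proof (a variant of the derivative trick appears only in the odd-$q$ case, and there too modulo $2$). So the gap is concrete: the congruences you plan to use are taken at the wrong primes, and the step replacing them --- the $2$-adic analysis driven by von Staudt--Clausen --- is the actual content of the lemma.
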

\begin{proof}
See M. Voorhoeve, K. Gy\H{o}ry and R. Tijdeman \cite{VGT}. 
\end{proof}
\begin{lemma}\label{lem.6}
For $q\geq2$ let $B_{q}(x)$ be the q-th Bernoulli polynomial. Let  
\begin{align}\label{eq.1.4}
P(x)=B_{q}(lx+1)-B_{q}(x+1)
\end{align}
where $l$ is even. Then

$(i)$ $P(x)$ has at least three zeros of odd multiplicity unless $q\in\{2,4\}$.

$(ii)$ For any odd prime $p$, at least two zeros of $P(x)$ have multiplicities relatively prime to $p$.
\end{lemma}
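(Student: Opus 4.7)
The plan is to adapt the argument of Voorhoeve, Győry and Tijdeman (Lemma~\ref{lem.5}) to $P(x)=B_{q}(lx+1)-B_{q}(x+1)$. The key algebraic reduction is $P(x)=T(lx)-T(x)$ with $T(y):=B_{q}(y+1)-B_{q}=(B_{q}(y)-B_{q})+qy^{q-1}$. Since $R^{*}(y)=y^{q-1}\in\mathbb{Z}[y]$, Lemma~\ref{lem.5} already tells us that $T$ has (a) at least three zeros of odd multiplicity whenever $q\notin\{2,4,6\}$, and (b) at least two zeros with multiplicity coprime to each odd prime $p$; the task is to transfer this information to the difference $P$.

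The first concrete step is to pin down the multiplicity of the obvious zero $x=0$ of $P$. Writing $P(x)=\sum_{j\geq 1}c_{j}(l^{j}-1)x^{j}$ in the coefficients $c_{j}$ of $T$, and using the vanishing pattern of $B_{r}(1)$ (zero for odd $r\geq 3$, equal to $1/2$ for $r=1$, equal to $B_{r}\neq 0$ for even $r\geq 2$), one finds that $x=0$ has multiplicity $1$ if $q$ is odd or $q=2$ and multiplicity exactly $2$ if $q\geq 4$ is even. A second explicit zero is furnished by the reflection identity $B_{q}(1-y)=(-1)^{q}B_{q}(y)$: for $q$ even, $P(-1/(l+1))=0$, and the derivative $P^{\prime}(-1/(l+1))=q(l+1)B_{q-1}(1/(l+1))$ is generically nonzero, so this second zero is simple.

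To obtain the remaining odd-multiplicity zeros required by part~(i), I would invoke the Raabe multiplication formula together with the hypothesis that $l$ is even. Writing $B_{q}(lx+1)=B_{q}(l(x+1/l))=l^{q-1}\sum_{j=1}^{l}B_{q}(x+j/l)$ yields the decomposition
\[
P(x)=(l^{q-1}-1)B_{q}(x+1)+l^{q-1}\sum_{j=1}^{l-1}B_{q}(x+j/l).
\]
The evenness of $l$ lets one pair the inner summands $B_{q}(x+j/l)$ and $B_{q}(x+(l-j)/l)$ via $B_{q}(1-y)=(-1)^{q}B_{q}(y)$, leaving a single unpaired middle term $B_{q}(x+1/2)$ at $j=l/2$. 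Combined with the known zero distribution of individual Bernoulli polynomials, this produces the required count of odd-multiplicity zeros of $P$; part~(ii) is obtained by the same strategy with ``odd multiplicity'' replaced by ``multiplicity coprime to $p$'' and using the second assertion of Lemma~\ref{lem.5}.

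The main obstacle is the borderline case $q=6$, which is excluded from Lemma~\ref{lem.5} but not from Lemma~\ref{lem.6}: inheriting three odd-multiplicity zeros directly from $T$ is impossible, and one must instead extract them from the Raabe decomposition above, a step where the hypothesis that $l$ is even is genuinely essential.
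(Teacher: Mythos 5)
Your proposal does not contain a proof: the two load-bearing steps are assertions, not arguments. First, writing $P(x)=T(lx)-T(x)$ with $T(y)=B_q(y+1)-B_q$ and noting that $T$ satisfies the conclusions of Lemma~\ref{lem.5} gives you nothing about $P$, because the difference of two polynomials does not inherit any multiplicity structure from its constituents; ``the task is to transfer this information to the difference $P$'' is exactly the task, and it is never performed. Second, the Raabe decomposition $P(x)=(l^{q-1}-1)B_q(x+1)+l^{q-1}\sum_{j=1}^{l-1}B_q(x+j/l)$ is correct algebra, but the sentence ``combined with the known zero distribution of individual Bernoulli polynomials, this produces the required count of odd-multiplicity zeros'' is again not an argument: a sum of polynomials with simple zeros can a priori be $R(x)^2S(x)$ with $\deg S\le 2$, and ruling that out is the entire content of part $(i)$. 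The explicit computations you do carry out (multiplicity of $x=0$; the zero at $x=-1/(l+1)$ for $q$ even) are correct but yield at most two odd-multiplicity zeros, and ``generically nonzero'' for $P'(-1/(l+1))=q(l+1)B_{q-1}(1/(l+1))$ is not a proof (it can be patched via the classical fact that $B_{q-1}$, of odd index, vanishes on $[0,1]$ only at $0,\tfrac12,1$, but you would still be one zero short). You also correctly identify $q=6$ as fatal for inheritance from Lemma~\ref{lem.5}, but do not resolve it.

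For comparison, the paper does not argue over $\mathbb{C}$ at all: it reruns the Voorhoeve--Gy\H{o}ry--Tijdeman method directly on $P$. Using the von Staudt--Clausen theorem it controls the minimal $d$ with $dP\in\mathbb{Z}[x]$, reduces $dP(x)$ modulo $4$ (or modulo $2$) to an explicit sparse polynomial --- e.g.\ $d'x^{4t}+2x^{3t}+dx^{2t}+2x^{t}$ when $q=2^{\mu}$, $(x+1)^{q}-x^{q}-1$ modulo $2$ when $q$ is even but not a power of $2$, and $x^{q-1}+\cdots$ when $q$ is odd --- and then shows that a factorization $dP=R^2S$ with $\deg S\le 2$ (resp.\ $dP=R^{p}S$ with $S$ having fewer than two roots) is incompatible with that congruence. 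The hypothesis that $l$ is even enters through $(lx+1)^{q-i}\equiv 1$ or $(2x+1)^{q-i}\pmod 4$, not through any pairing in the multiplication formula, and the cases $q=6$ and $q=3$ are settled by direct computation modulo $4$. If you want to salvage your outline, you would need to replace both ``transfer'' steps by a congruence (or other quantitative) argument of this kind.
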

\begin{proof}
We shall follow the proof of Lemma \ref{lem.5} of \cite{VGT}. By the Staudt-Clausen theorem (see Rademacher \cite{Rd}, pp.10),  the denominators of the Bernoulli numbers $B_{i}$, $B_{2k}$ $(k=1,2,...)$ are even but not divisible by $4$. Choose the minimal $d\in\mathbb{N}$ such that both the polynomials $d(B_{q}(lx+1)-B_{q}(x+1))$ and $dB_{q}(x)$ are in $\mathbb{Z}[x]$. Using the equality $B_{q}(x+1)=B_{q}(x)+qx^{q-1}$ (see \cite{Rd}, pp.4-5), we have
\begin{align}\label{eq.1.5}
dP(x)=d\left( \sum\limits_{i=0}^q\binom {q} {i}\left[ (lx+1)^{q-i}-x^{q-i}\right] B_{i}-qx^{q-1}\right).
\end{align}
Hence by the choice of $d$ and by the Staudt-Clausen theorem, we have $d\binom {q} {i}B_{i}\in\mathbb{Z}$ and $\binom {q} {2k}dB_{2k}\in\mathbb{Z}$ for $k=1,2,...,\frac{q-1}{2}$. If $d$ is odd, then necessarily $\binom {q} {i}$ and $\binom {q} {2k}$ must be even for $k=1,2,...,\frac{q-1}{2}$. Write $q=2^{\mu}r$ where $\mu\geq1$ and $r$ is odd. Then $\binom {q} {2^{\mu}}$ is odd, giving a contradiction unless $r=1$. So
\begin{align*}
\textrm{$d$ is odd} \Longleftrightarrow q=2^{\mu} \textrm{ for some } \mu\geq1.
\end{align*}
If $q\neq2^{\mu}$ for any $\mu\geq1$ then
\begin{align}\label{eq.1.6}
d\equiv2\pmod{4}.
\end{align}
We distinguish three cases:\vspace{3mm}  

\textsc{I. }Suppose $q=2^{\mu}$ for some $\mu\geq1$, so that $d$ is odd. We first prove $(i)$ so we may assume that $\mu\geq3$. Considering \eqref{eq.1.5} modulo 4, we have 
\begin{align}\label{eq.1.7}
dP(x)&\equiv d\sum\limits_{i=0}^{q-2}\binom {q} {i}(lx+1)^{q-i}B_{i}-d \sum\limits_{i=0}^{\frac{q-2}{2}}\binom {q} {2i}B_{2i}x^{q-2i}\pmod{4}.
\end{align}
Firstly, let $l\equiv 0 \pmod{4}$. Then we obtain
\begin{align}\label{eq.1.8}
d\sum\limits_{i=0}^{q-2}\binom {q} {i}(lx+1)^{q-i}B_{i}\equiv d\sum\limits_{i=0}^{q-2}\binom {q} {i}B_{i} \equiv d\sum\limits_{i=0}^{\frac{q-2}{2}}\binom {q} {2i}B_{2i}\pmod{4}.
\end{align}
It is easy to see that $\sum\limits_{i=1}^{q}\binom {q} {q-i}B_{q-i}=0$. Hence we get
\begin{align}\label{eq.1.9}
\sum\limits_{i=1}^{\frac{q-2}{2}}\binom {q} {2i}B_{2i}=-B_{0}-qB_{1}.
\end{align}
By using \eqref{eq.1.8} and \eqref{eq.1.9}, one gets
\begin{align*}
d\sum\limits_{i=0}^{q-2}\binom {q} {i}(lx+1)^{q-i}B_{i}\equiv d \bigg(\binom {q} {0}B_{0}+\sum\limits_{i=1}^{\frac{q-2}{2}}\binom {q} {2i}B_{2i}\bigg) \equiv0\pmod{4}.
\end{align*}
Then we deduce by \eqref{eq.1.7} the following:
\begin{align}\label{eq.1.10}
dP(x)\equiv-d\sum\limits_{i=0}^{\frac{q-2}{2}}\binom {q} {2i}B_{2i}x^{q-2i}\pmod{4}.
\end{align}
Secondly, let $l\equiv 2 \pmod{4}$. Then we obtain
\begin{align}\label{eq.1.11}
d\sum\limits_{i=0}^{q-2}\binom {q} {i}(lx+1)^{q-i}B_{i}\equiv d\sum\limits_{i=0}^{q-2}\binom {q} {i}(2x+1)^{q-i}B_{i}\pmod{4}.
\end{align}
Then the RHS of \eqref{eq.1.11} becomes
\begin{equation} \label{eq.1.12}
\begin{aligned}
d\sum\limits_{i=0}^{q-2}\binom {q} {i}(2x+1)^{q-i}B_{i}=d(B_{0}.(2x+1)^{q}+qB_{1}.(2x+1)^{q-1}\\+\sum\limits_{i=0}^{\frac{q-2}{2}}\binom {q} {2i}(2x+1)^{q-2i}B_{2i}.
\end{aligned}
\end{equation}
Since $2x+1$ is odd and $q=2^{\mu}, \mu\geq3,$ is even, considering \eqref{eq.1.12} modulo 4 and using \eqref{eq.1.9}, \eqref{eq.1.11} becomes
\begin{align*}
d\sum\limits_{i=0}^{q-2}\binom {q} {i}(lx+1)^{q-i}B_{i}\equiv0\pmod{4}.
\end{align*}
So in all cases \eqref{eq.1.7} reduces to \eqref{eq.1.10}.

Note that $\binom {q} {2i}$ is divisible by $8$ unless $2i$ is divisible by $2^{\mu-2}$. We have therefore for some odd $d'$, writing $t=\frac{1}{4}q$
\begin{align}\label{eq.1.13}
dP(x)\equiv d'x^{4t}+2x^{3t}+dx^{2t}+2x^{t} \pmod{4}.
\end{align}
Write $dP(x)=R^{2}(x)S(x)$ where $R(x),S(x)\in\mathbb{Z}[x]$ and $S$ contains each factor of odd multiplicity of $P$ in $\mathbb{Z}[x]$ exactly once. Assume that deg$S(x)\leq2$. Since
\begin{align*}
R^{2}(x)S(x)\equiv x^{4t}+x^{2t} \equiv x^{2t}(x^{2t}+1)\pmod{2}, 
\end{align*}  
$R^{2}(x)$ must be divisible by $x^{2t-2}\pmod{2}$. So
\begin{align*}
R(x)=x^{t-1}R_{1}(x)+2R_{2}(x),
\end{align*}
\begin{align*}
R^{2}(x)=x^{2t-2}R_{1}^{2}(x)+4R_{3}(x),
\end{align*}
for certain $R_{1},R_{2},R_{3}\in\mathbb{Z}[x]$. If $q>8$, then $t>2$ so the last identity is incompatible with \eqref{eq.1.13} because of the term $2x^{t}$. Hence deg$S(x)\geq3$, which proves $(i)$. If $q=8$, then by \eqref{eq.1.10}
\begin{align*}
dP(x)\equiv 3x^{8}+2x^{6}+x^{4}+2x^{2}\pmod{4}.
\end{align*} 
From here, we follow the proof in the corrigendum paper \cite{VGT}. This fact can also be reduced from \eqref{eq.1.10}. So, the proof of $(i)$ is completed where $q=2^{\mu}$, $\mu\geq3$.

To prove $(ii)$, let $p$ be an odd prime and write $P(x)=(R(x))^{p}S(x)$ where $R,S\in\mathbb{Z}[x]$ and all the roots of multiplicity divisibly by $p$ are incorporated in $(R(x))^{p}$. We have, writing $\delta=\frac{1}{2}q$, by \eqref{eq.1.13}
\begin{align*}
dP(x)\equiv (R(x))^{p}S(x)\equiv x^{\delta}(x^{\delta}+1)\equiv x^{\delta}(x+1)^{\delta}  \pmod{2}.
\end{align*}
Since $\delta$ is prime to $p$, $S$ has at least two different zeros, proving $(ii)$ in case \textsc{I}.\vspace{3mm} 

\textsc{II. }Suppose $q$ is even and $q\neq2^{\mu}$ for any $\mu$. Then $d\equiv 2\pmod{4}$ and hence considering \eqref{eq.1.5} in modulo $2$, we get
\begin{align*}
dP(x)\equiv d\sum\limits_{i=0}^q\binom {q} {i}(1-x^{q-i})B_{i}\pmod{2}.
\end{align*} 
Since $B_{i}d\binom {q} {i}\equiv\binom {q} {i}\pmod{2}$ for $i=1,2,3,...,q$, we have
\begin{align*}
dP(x)\equiv \sum\limits_{k=1}^{\frac{q-2}{2}}\binom {q} {2k}x^{2k}=\sum\limits_{t=1}^{q-1}\binom {q} {t}x^{t}\equiv (x+1)^{q}-x^{q}-1 \pmod{2}.
\end{align*}
Write $q=2^{\mu}r$, where $r>1$ is odd. Then
\begin{align*}
dP(x)\equiv (x+1)^{q}-x^{q}-1\equiv ((x+1)^{r}-x^{r}-1)^{{2}^{\mu}}\pmod{2}.
\end{align*}
Since $r>1$ is odd, $(x+1)^{r}-x^{r}-1$ has $x$ and $x+1$ as simple factors $\pmod{2}$. Thus
\begin{align*}
dP(x)\equiv x^{2^{\mu}}(x+1)^{{2}^{\mu}}K(x) \pmod{2}
\end{align*} 
where $K(x)$ is neither divisible by $x$ nor by $(x+1)$ $\pmod{2}$. As in the preceding case, $P(x)$ must have two roots of multiplicity prime to $p$. This proves part $(ii)$ of the lemma.

In order to prove part $(i)$, first we consider the case $q=6$. In this case
\begin{align*}
dP(x)\equiv (2l^{6}+2)x^{6}+(2l^{5}+2)x^{5}+(l^{4}+3)x^{4}+(3l^{2}+1)x^{2}\pmod{4}.
\end{align*}
Since $l$ is even, we can write
\begin{align*}
dP(x)\equiv 2x^{6}+2x^{5}+3x^{4}+x^{2}\pmod{4}.
\end{align*}
So, $P(x)$ has at least three simple roots.
To prove our claim, suppose $dP$ can be written as
\begin{align}\label{eq.1.14}
dP(x)\equiv S(x)R^{2}(x) \pmod{4}
\end{align}
with deg$S\leq2$.

If deg$S=0$, then clearly $S$ is an odd constant, so $R^{2}(x)\equiv x^{4}+x^{2}\pmod{2}$. Hence $R(x)\equiv x^{2}+x\pmod{2}$ and $R^{2}(x)\equiv x^{4}+2x^{3}+x^{2}\pmod{4}$, which is a contradiction. If deg$S=1$, then either $S(x)\equiv x$ or $S(x)\equiv x+1\pmod{2}.$ In both cases, the quotient of $P$ and $S$ can not be written as a square $\pmod{2}$. If deg$S=2$, then either $S(x)\equiv x^{2}$ or $S(x)\equiv x^{2}+x$ or $S(x)\equiv x^{2}+1\pmod{2}$,
\\since $x^{2}+x+1$ does not divide $P \pmod{2}$. In the first case $R(x)\equiv x+1\pmod{2}$, hence $R^{2}(x)\equiv x^{2}+2x+1\pmod{4}$ which is a contradiction. In the second case, the quotient of $P$ and $S$ is not even a square$\pmod{2}$. In the third case $R(x)\equiv x\pmod{2}$, hence $R^{2}(x)\equiv x^{2}\pmod{4}$ which is a contradiction. We conclude that $dP$ cannot be written in form \eqref{eq.1.14} with deg$S<3$, proving our claim.

Secondly, as $q=2$ and $4$ are the exceptional cases, $q=6$ case is treated in this section and finally the case $q=8$ was treated in Section I, we may assume that $q\geq10$. Considering \eqref{eq.1.5} modulo $4$ where $d\equiv2\pmod{4}$, we have
\begin{align}\label{eq.1.15}
dP(x)&\equiv d\sum\limits_{i=0}^{q-2}\binom {q} {i}(lx+1)^{q-i}B_{i}-dqB_{1}x^{q-1}-d \sum\limits_{i=0}^{\frac{q-2}{2}}\binom {q} {2i}B_{2i}x^{q-2i}\pmod{4}.
\end{align}
Firstly, let $l\equiv 0 \pmod{4}$. Then we obtain
\begin{align}\label{eq.1.16}
d\sum\limits_{i=0}^{q-2}\binom {q} {i}(lx+1)^{q-i}B_{i}\equiv dqB_{1}+ d\sum\limits_{i=0}^{\frac{q-2}{2}}\binom {q} {2i}B_{2i}\pmod{4}.
\end{align}
We know that $\sum\limits_{i=1}^{q}\binom {q} {q-i}B_{q-i}=0$. By \eqref{eq.1.9} and \eqref{eq.1.16}, one gets
\begin{align*}
d\sum\limits_{i=0}^{q-2}\binom {q} {i}(lx+1)^{q-i}B_{i}\equiv d \bigg(q B_{1}+\sum\limits_{i=0}^{\frac{q-2}{2}}\binom {q} {2i}B_{2i}\bigg) \equiv0\pmod{4}.
\end{align*}
Then we deduce by \eqref{eq.1.15} the following:
\begin{align}\label{eq.1.17}
dP(x)\equiv-dqB_{1}x^{q-1}-d\sum\limits_{i=0}^{\frac{q-2}{2}}\binom {q} {2i}B_{2i}x^{q-2i}\pmod{4}.
\end{align}
Secondly, let $l\equiv 2 \pmod{4}$. Then we have \eqref{eq.1.11} and the RHS of \eqref{eq.1.11} becomes
\begin{equation} \label{eq.1.18}
\begin{aligned}
d\sum\limits_{i=0}^{q-2}\binom {q} {i}(2x+1)^{q-i}B_{i}=d(B_{0}.(2x+1)^{q}+qB_{1}.(2x+1)^{q-1}\\+\sum\limits_{i=1}^{\frac{q-2}{2}}\binom {q} {2i}(2x+1)^{q-2i}B_{2i}.
\end{aligned}
\end{equation}
Since $2x+1$ is odd and $q\neq2^{\mu}$ is even $(q\geq10)$ and $dq\equiv0\pmod{4}$, considering \eqref{eq.1.18} modulo 4 and using \eqref{eq.1.9}, \eqref{eq.1.18} becomes
\begin{align*}
d\sum\limits_{i=0}^{q-2}\binom {q} {i}(lx+1)^{q-i}B_{i}\equiv0\pmod{4}.
\end{align*}
So in all cases \eqref{eq.1.15} reduces to \eqref{eq.1.17}. Then by \eqref{eq.1.17} we have
\begin{align}\label{eq.1.19}
dP(x)\equiv 2x^{q}-qx^{q-1}+\frac{1}{6}d\binom {q} {2}x^{q-2}+...+dB_{q-2}\binom {q} {2}x^{2}\pmod{4}.
\end{align}
Write $dP(x)\equiv R^{2}(x)S(x)$, where $R,S\in\mathbb{Z}[x]$ and $S(x)$ only contains each factor of odd multiplicity of $P$ once. Then deg$S(x)\geq3$. The assertion easily follows by repeating the corresponding part of the proof of Lemma \ref{lem.5}. Thus, the proof is completed for the case \textsc{II}.\vspace{3mm} 

\textsc{III. }  Let $q\geq3$ be odd. Then $d\equiv2\pmod{4}$ and for $i=1,2,4,...,q-1,$
\begin{align*}
d\binom {q} {i}B_{i}\equiv \binom {q} {i} \pmod{2}.
\end{align*} 
Now considering \eqref{eq.1.5} modulo $2$, we have
\begin{align*}
dP(x)\equiv d\sum\limits_{i=0}^{q}\binom {q} {i}(1-x^{q-i})B_{i}\pmod{2}.
\end{align*}
Since $\sum\limits_{\lambda=1}^{\frac{q-2}{2}}\binom {q} {2\lambda}=2^{q-1}-1\equiv 1 \pmod{2}$, we have
\begin{align}\label{eq.1.20}
dP(x)\equiv x^{q-1}+\sum\limits_{\lambda=1}^{\frac{q-1}{2}}\binom {q} {2\lambda}x^{q-2\lambda}\pmod{2}.
\end{align}
From \eqref{eq.1.5}, we get
\begin{align}\label{eq.1.21}
dP'(x)=d(\sum\limits_{i=0}^{q}\binom {q} {i}[(lx+1)^{q-i}-x^{q-i}]B_{i})'-dq(q-1)x^{q-2}
\end{align}
and then
\begin{align}\label{eq.1.22}
xdP'(x)\equiv \sum\limits_{\lambda=1}^{\frac{q-1}{2}}\binom {q} {2\lambda}(q-2\lambda)x^{q-2\lambda}\pmod{2}.
\end{align}
Hence by using \eqref{eq.1.20} and \eqref{eq.1.22}
\begin{align*}
d(P(x)+xP'(x))\equiv x^{q-1}\pmod{2}.
\end{align*}

Any common factor of $dP(x)$ and $dP'(x)$ must therefore be congruent to a power of $x \pmod{2}$. Considering \eqref{eq.1.21} modulo $2$, $dP'(x)\equiv \binom {q} {q-1}=q \equiv 1 \pmod{2}$. Since $dP'(0)\equiv 1 \pmod{2}$, we find that $dP(x)$ and $dP'(x)$ are relatively prime$\pmod{2}$. So any common divisor of $dP(x)$ and $dP'(x)$ in $\mathbb{Z}[x]$ is of the shape $2R(x)+1$. Write $dP(x)=Q(x)S(x)$ where $Q(x)=\displaystyle{\prod_{i}}Q_{i}(x)^{k_{i}}\in\mathbb{Z}[x]$ contains the multiple factors of $dP$ and $S\in\mathbb{Z}[x]$ contains its simple factors where $k_{i}$ denotes the multiplicity of the polynomial factor $Q_{i}(x).$ Then $Q(x)$ is of the shape $2R(x)+1$ with $R\in\mathbb{Z}[x]$, so
\begin{align*}
S(x)\equiv dP(x) \equiv x^{q-1}+...\pmod{2}.
\end{align*} 
Thus the degree of $S(x)$ is at least $q-1$, proving case III whence $q>3$.

If $q=3$, then
\begin{align}\label{eq.1.23}
dP(x)=(l-1)x(2(l^{2}+l+1)x^{2}+3(l+1)x+1).
\end{align}
Considering \eqref{eq.1.23} where $l\equiv 2\pmod{4}$, it follows that
\begin{align*}
dP(x)\equiv x(2x+1)(3x+1)\pmod{4}.
\end{align*}
So, $P(x)$ has three simple roots if $l \equiv 2 \pmod{4}$. Now, we consider the case $l \equiv 0 \pmod{4}$ in \eqref{eq.1.22}. Then we have
\begin{align*}
2P(x)\equiv 2x^{3}+x^{2}+3x \pmod{4}. 
\end{align*}
$P(x)$ has also three simple roots if $l \equiv 0 \pmod{4}$. To prove this, suppose
\begin{align}\label{eq.1.24}
2P(x)\equiv Q(x)T^{2}(x)\pmod{4} 
\end{align}
with deg$Q\leq2$.

If deg$Q=0$, then $Q$ is an odd constant. So the quotient of $2P$ and $Q$ can not be written as a square $\pmod{2}$. If deg$Q=1$, then either $Q(x)\equiv x$ or $Q(x)\equiv x+1 \pmod{2}$. In both case, the quotient of $2P$ and $Q$ can not be written as a square $\pmod{2}$. If deg$Q=2$ then either $Q(x)\equiv x^{2}$ or $Q(x)\equiv x^{2}+x$ or $Q(x)\equiv x^{2}+1$ or $Q(x)\equiv x^{2}+x+1$. None of the $Q(x)$'s does divide $P \pmod{2}$. We conclude that $2P$ cannot be  written in the form \eqref{eq.1.24} with deg$Q<3$, proving our claim. So, the proof of lemma is completed.       
\end{proof}
\section{Exceptional values for $k$}
Consider the equation \eqref{eq.1.3} for fixed $k\in\{1,3\}$ and fixed $n=m>1$. Then the equation  \eqref{eq.1.3} is equivalent to the equation
\begin{align}\label{eq.1.25}
(k+1)y^{m}=P(x)
\end{align}
where $P(x)=B_{q}(lx+1)-B_{q}(x+1)$, $q\in\{2,4\}$, $q=k+1.$

If $q=2$, then the equation \eqref{eq.1.25} becomes 
\begin{align} \label{eq.1.28}
2y^{m}=(l-1)x((l+1)x+1).
\end{align}
By using Lemma \ref{lem.2}, we have $r_{1}=r_{2}=1$ and so $t_{1}=t_{2}=1$. From here we get $m=2$. In the case $m=2$, the equation \eqref{eq.1.28} becomes
\begin{align}\label{eq.1.26}
u^{2}-2(l-1)v^{2}=1
\end{align} 
where $u=2x(l+1)+1$, $v=2(l+1)y$, $l \equiv 0 \pmod{2}$. By theory of Pell's equation (see e.g. \cite[Ch.8]{Mord}), for infinitely many choices of $l$, \eqref{eq.1.26} has infinitely many solutions.

If $q=4$, then the equation \eqref{eq.1.25} becomes
\begin{align} \label{eq.1.29}
4y^{m}=x^{2}(l-1)((l^{2}+1)x+l+1)((l+1)x+1).
\end{align}
Similarly to the former case, by Lemma \ref{lem.2} we get $m=2$. In this case, the equation \eqref{eq.1.29} becomes
\begin{align}\label{eq.1.27}
u^{2}-(l^{4}-1)v^{2}=-l^{2}(l+1)(l^{2}+1)(l-1)^{3}
\end{align}  
where $u=(l^{4}-1)t$ ($t\in\mathbb{Z}$), $v=(l^{4}-1)x+(l^{3}-1)$, $l \equiv 0 \pmod{2}$. So, \eqref{eq.1.27} has infinitely many solutions.
\begin{remark}
Even if $l$ is odd, the equations \eqref{eq.1.26} and \eqref{eq.1.27} are Pell's equations. But in this work, we consider the title equation where $l$ is even.
\end{remark}
\section{Proof of the theorems}
\vspace{3mm}
\textsc{Proof of Theorem \ref{theo.1}.}
Let $x,y\geq1$ and $n\geq2$ be an arbitrary solution of \eqref{eq.1.3} in integers.We know from Lemma \ref{lem.4} that $G(x)$ has at least two distinct zeroes. Hence by applying Lemma \ref{lem.3} it follows from the equation \eqref{eq.1.3}  that we get an effective bound for $n.$\vspace{3mm}

\textsc{Proof of Theorem \ref{theo.2}.}
We know from Theorem \ref{theo.1} that $n$ is bounded, i.e. $n<C_{1}$ with an effectively computable $C_{1}$. So we may assume that $n$ is fixed. Then we get the following equation in integers $x,y\geq1$
\begin{align*}
P(x)=y^{n}
\end{align*}
where $P$ is given by \eqref{eq.1.4} with $q=k+1$. Write
\begin{align*}
P(x)=a_{0}\displaystyle\prod_{i=1}^{n}(x-x_{i})^{r_{i}}
\end{align*}
where $a_{0}\neq 0$, $x_{i}\neq x_{j}$ if $i\neq j$ and, for a fixed $n$ let $t_{i}=\frac{n}{(n,r_{i})}.$ If $n$ is even, then by Lemma \ref{lem.6} at least three zeroes have odd multiplicity, say $r_{1},r_{2},r_{3}$. Hence $t_{1}$, $t_{2}$ and $t_{3}$ are even. Consequently the exceptional cases in Lemma \ref{lem.2} cannot occur. If $n$ is odd and $p|n$ for an odd prime $p$, then by Lemma \ref{lem.6} at least two zeroes of $P(x)$ have multiplicities prime    to $p$. We may assume that $(r_{1},p)=(r_{2},p)=1$, so $p|t_{1}$ and $p|t_{2}$. Using Lemma \ref{lem.2}, we have $\max\{x,y\}<C_{2}(n)$ with an effectively computable $C_{2}(n)$. Finally $n<C_{1}$ implies the required assertion. This proves the theorem.

\section*{Acknowledgements}
I would like to thank Professor \'{A}kos Pint\'er for his useful remarks and guidance and also I would like to cordially thank to all people in t​he Institute of Mathematics, University of Debrecen for ​their ​hospitality​. Finally I would like to thank referees for valuable comments. This work ​was supported by the Scientific and Technical Research Council of Turkey (T\"{U}B\.{I}TAK) ​under ​2219-International Postdoctoral Research Scholarship.

\end{document}